\documentclass[11pt,reqno]{amsart}

\usepackage[T1]{fontenc}
\usepackage[margin=1.1in]{geometry}
\usepackage{amsmath,amssymb,amsthm,mathtools}
\usepackage{enumitem}
\usepackage{hyperref}
\usepackage{mathrsfs}
\usepackage{microtype}

\hypersetup{
  colorlinks=true,
  linkcolor=blue,
  citecolor=blue,
  urlcolor=blue
}

\newtheorem{theorem}{Theorem}[section]
\newtheorem{lemma}[theorem]{Lemma}
\newtheorem{proposition}[theorem]{Proposition}
\newtheorem{corollary}[theorem]{Corollary}
\newtheorem{question}[theorem]{Question}

\theoremstyle{definition}

\newtheorem{remark}[theorem]{Remark}
\newtheorem{example}[theorem]{Example}

\newcommand{\R}{\mathbb R}
\newcommand{\B}{\mathbb B}
\newcommand{\dd}{\,d}

\newcommand{\divf}{\operatorname{div}_f}
\newcommand{\Harm}{\mathcal H}
\newcommand{\Ric}{\operatorname{Ric}}
\newcommand{\Hess}{\operatorname{Hess}}
\newcommand{\tr}{\operatorname{tr}}
\newcommand{\Int}{\operatorname{Int}}
\newcommand{\Id}{\operatorname{Id}}

\title[Cohomology vanishing in Gaussian-weighted balls]
{Cohomology Vanishing for Free Boundary $f$-Minimal Submanifolds in Gaussian-Weighted Euclidean Balls}

\author{Niang Chen}
\address{Faculty of Arts and Sciences, Beijing Normal University, Zhuhai 519087, China}
\email{chenniang@bnu.edu.cn}

\begin{document}

\begin{abstract}
Let $M^n\subset \overline{\B_R^{n+k}}\subset \R^{n+k}$ be a compact orientable free boundary $f$-minimal submanifold of the Gaussian-weighted Euclidean ball $\left(\overline{\B_R^{n+k}},g_{\rm can},e^{-f}\dd V\right),
       f(x)=\frac c2 |x|^2,c\ge 0.$
We prove a cohomology vanishing theorem under the pointwise pinching condition $ |A|^2\le \frac{n-p}{R^2},1\le p<n.$
More precisely, the space of tangential $f$-harmonic $p$-forms vanishes, and hence$H^p(M;\R)=0.$
The proof is based on three elementary ingredients in the Gaussian-weighted ball: a weighted Hardy inequality obtained from the identity $\divf(x^T)=n-c|x|^2$, a cancellation in the weighted Weitzenb\"ock curvature operator, and a boundary reduction showing that tangential $f$-harmonic forms satisfy the same local absolute-boundary algebra as in the unweighted case.  The constant pinching threshold is independent of the Gaussian parameter $c$, and the argument also includes the unweighted case $c=0$; the strict interior positivity comes from the full Hardy--Weitzenb\"ock coefficient rather than from the sign of $c$ alone.
\end{abstract}

\subjclass[2020]{53C42}
\keywords{Free boundary submanifolds, $f$-minimal submanifolds, Cohomology vanishing, Gaussian weight}

\maketitle

\section{Introduction}

A central theme in submanifold geometry is the use of analytic identities to extract topological information from curvature, boundary geometry, and extrinsic pinching.  In the closed setting this philosophy is classical: the Bochner--Weitzenb\"ock formula relates the Hodge Laplacian to curvature, and positivity of the curvature term forces harmonic forms, hence de Rham cohomology classes, to vanish.  We refer to Wu's monograph \cite{Wu1988Bochner} for the classical Bochner technique and its differential-geometric applications.  Closely related vanishing phenomena have also been studied through geometric measure theoretic methods, including Xin's application of integral currents to vanishing theorems \cite{Xin1984IntegralCurrents}.  When a boundary is present, the same circle of ideas must be combined with boundary integral identities; the integral-formula viewpoint developed by Yano \cite{Yano1970IntegralFormulas} is one of the classical sources behind the Reilly--Yano type formulas used in later work on differential forms.

For free boundary minimal submanifolds, the interaction between topology and geometry has been investigated through stability, index estimates, harmonic forms, and Bochner-type identities.  Index-topology comparisons were developed in closed and free boundary settings by Ambrozio--Carlotto--Sharp and related works \cite{ACS2018Closed,ACS2018FB,Fraser2007}; compactness and topology of embedded free boundary minimal surfaces were studied by Fraser--Li \cite{FraserLi2014}.  In Euclidean balls, cohomology and homology vanishing results are often proved by combining Reilly-type identities for differential forms with Hardy inequalities that control the boundary contribution; see, for example, Cavalcante--Mendes--Vit\'orio, Chen--Ge, and Onti--Vlachos \cite{CavalcanteMendesVitorio2019,ChenGe2022,OntiVlachos2022}.  For a recent survey of the broader sphere--ball correspondence, including Bochner--Hardy methods for free boundary cohomology vanishing, curvature pinching and gap phenomena, and index--topology comparisons, see \cite{Chen2026Survey}.

Weighted geometry provides a parallel variational framework in which the ambient measure is changed from $\dd V$ to $e^{-f}\dd V$.  Compact free boundary hypersurfaces in manifolds with density were studied by Castro--Rosales, including first and second variation formulas and rigidity results \cite{CastroRosales2014}; related compactness results for free boundary $f$-minimal surfaces in smooth metric measure spaces were obtained by Barbosa--Wei \cite{BarbosaWei2016}.  Weighted harmonic forms and weighted Hodge theory also play a role in index and topology questions for $f$-minimal hypersurfaces; see Bueler \cite{Bueler1999}, Seo--Yun \cite{SeoYun2020}, the index estimates of Impera--Rimoldi \cite{ImperaRimoldi2020} and Impera--Rimoldi--Savo \cite{ImperaRimoldiSavo2020}, and the free boundary $f$-minimal index estimates of Chen--Ge--Zhang \cite{ChenGeZhang2023}.  Throughout this paper, $f$ denotes the potential of the density $e^{-f}$, not the density itself.  Thus our Bakry--Emery convention is
\[
        \Ric_f=\Ric+\Hess f,
\]
which differs only notationally from papers that write the density as $e^{\psi}$ and hence use $\Ric_\psi=\Ric-\Hess\psi$.

The aim of this note is to isolate a simple Gaussian cancellation mechanism for free boundary $f$-minimal submanifolds in Euclidean balls.  Let
\[
        f(x)=\frac c2|x|^2,
        \qquad c\ge 0,
\]
and let
\[
        M^n\subset \overline{\B_R^{n+k}}\subset \R^{n+k}
\]
be compact, orientable, and free boundary $f$-minimal.  With the convention
\[
        \vec H=\sum_{i=1}^n A(e_i,e_i),
\]
the $f$-minimal equation reads
\[
        \vec H+(\bar\nabla f)^\perp=0.
\]
Since $\bar\nabla f=cx$, this becomes
\begin{equation}\label{eq:fminimal-basic}
        \vec H=-c x^\perp.
\end{equation}
Thus $c=0$ is precisely the unweighted free boundary minimal case.

The identity \eqref{eq:fminimal-basic} produces two cancellations.  First, in the weighted divergence of the tangential position vector one obtains
\begin{equation}\label{eq:weighted-div-position-intro}
        \divf(x^T)=n-c|x|^2,
\end{equation}
so the mean curvature term disappears from the Hardy calculation.  Second, in the weighted Weitzenb\"ock formula, the trace term in the extrinsic Bochner curvature is cancelled by the Hessian contribution of the Gaussian weight, leaving
\begin{equation}\label{eq:Bf-cancellation-intro}
        \mathcal B_f^{[p]}
        =cp\,\Id-\sum_\alpha (S_\alpha^{[p]})^2.
\end{equation}
Consequently,
\[
        \langle \mathcal B_f^{[p]}\omega,\omega\rangle
        \ge p(c-|A|^2)|\omega|^2.
\]
Compared with the unweighted free boundary vanishing theorems in Euclidean balls \cite{ChenGe2022}, the Gaussian drift has two opposite effects: it creates a negative bulk term in the weighted Hardy identity, but it also cancels the trace term in the extrinsic Weitzenb\"ock operator.  These two effects combine to give a constant pinching threshold independent of the Gaussian parameter $c$.  A sharper reading of the Hardy term shows, however, that the final strict positivity does not require $c>0$: the proof naturally covers all $c\ge 0$, including the unweighted case $c=0$.  Positive $c$ should instead be viewed as providing an additional interior curvature allowance in the spatially relaxed form of the pinching condition; see Remark \ref{rem:spatial-relaxation}.

The proof has three concrete inputs.  The weighted Hardy identity starts from \eqref{eq:weighted-div-position-intro}; the weighted Weitzenb\"ock term is reduced to \eqref{eq:Bf-cancellation-intro}; and after applying the boundary reduction and the Kato inequality, the remaining zeroth-order coefficient satisfies the sharper bound
\[
        K(x)\ge \frac p{R^2}\bigl(n-p-R^2|A|^2\bigr)
        +\left(pc+\frac{p^2}{R^2}\right)
        \left(1-\frac{|x|^2}{R^2}\right).
\]
The pinching condition makes the first term nonnegative, while the second term is strictly positive in the interior of the ball for every $c\ge 0$ and $1\le p<n$.

The main result is the following.

\begin{theorem}\label{thm:main}
Let $M^n\subset \overline{\B_R^{n+k}}$ be a smooth compact orientable free boundary $f$-minimal submanifold of the weighted Euclidean ball
\[
        \left(\overline{\B_R^{n+k}},g_{\rm can},e^{-f}\dd V\right)
        \subset
        \left(\R^{n+k},g_{\rm can},e^{-f}\dd V\right),
        \qquad f(x)=\frac c2|x|^2,
        \qquad c\ge 0.
\]
Assume that $\partial M\ne\emptyset$ and $\Int(M)\subset \B_R^{n+k}$.  Let $1\le p<n$.  If
\begin{equation}\label{eq:main-pinching}
        |A|^2\le \frac{n-p}{R^2}
        \qquad\text{on }M,
\end{equation}
then
\[
        \Harm^p_{N,f}(M)=0.
\]
In particular, by weighted Hodge theory with absolute boundary condition,
\[
        H^p(M;\R)=0.
\]
\end{theorem}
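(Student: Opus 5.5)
The plan is a weighted Bochner--Hardy argument. Take $\omega\in\Harm^p_{N,f}(M)$, so $d\omega=0$, $\delta_f\omega=0$ and $\iota_\nu\omega=0$ on $\partial M$. We aim to show $\omega\equiv 0$.

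\emph{Step 1: weighted Bochner identity.} Integrate the weighted Weitzenb\"ock formula $\Delta_f=\nabla^*_f\nabla+\mathcal B_f^{[p]}$ against $\omega$ with respect to $e^{-f}\dd V$. Since $\omega$ is $f$-harmonic, this gives
\[
0=\int_M\bigl(|\nabla\omega|^2+\langle\mathcal B_f^{[p]}\omega,\omega\rangle\bigr)e^{-f}
-\tfrac12\int_{\partial M}\partial_\nu|\omega|^2 e^{-f}.
\]
We then use the cancellation \eqref{eq:Bf-cancellation-intro}. It follows from $\Ric_f=\Ric+c\,g$ together with the Gauss equation and \eqref{eq:fminimal-basic}: the $\langle\vec H,A\rangle$ term cancels against the $c\langle x^\perp,A\rangle$ part of $\Hess f|_M$. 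This yields
\[
\langle\mathcal B_f^{[p]}\omega,\omega\rangle\ge p(c-|A|^2)|\omega|^2.
\]

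\emph{Step 2: boundary term.} Free boundary means the conormal is $\nu=x/R$ on $\partial M$, and $\partial M$ sits in the sphere of radius $R$. The absolute boundary condition is local and untouched by the weight, since $\delta_f=\delta+\iota_{\nabla f}$ and $\nabla f=cx^T=cR\nu$ on $\partial M$, so $\iota_{\nabla f}\omega=0$ there. Hence the standard computation applies: the Reilly-type identity for $\omega$ tangential on $\partial M$ gives
\[
\tfrac12\partial_\nu|\omega|^2=-\langle S_{\partial M}^{[p]}\omega,\omega\rangle=-\tfrac pR|\omega|^2 .
\]
Here the second fundamental form of $\partial M$ in $M$ with respect to $\nu$ equals that of the sphere, namely $\tfrac1R g$. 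The boundary term therefore has a sign that must be absorbed.

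\emph{Step 3: weighted Hardy inequality.} Apply the weighted divergence theorem to $X=|\omega|^2x^T$ and use \eqref{eq:weighted-div-position-intro}:
\[
R\int_{\partial M}|\omega|^2e^{-f}=\int_M\bigl((n-c|x|^2)|\omega|^2+\langle x^T,\nabla|\omega|^2\rangle\bigr)e^{-f}.
\]
Bound the last term by Cauchy--Schwarz, $2|x^T||\omega||\nabla|\omega||\le \varepsilon^{-1}|x|^2|\omega|^2+\varepsilon|\nabla|\omega||^2$. Then use the refined Kato inequality for closed and $f$-coclosed forms, $|\nabla\omega|^2\ge(1+\tfrac1{\max(p,n-p)})|\nabla|\omega||^2$, or simply $|\nabla\omega|^2\ge|\nabla|\omega||^2$.

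Multiply the Hardy identity by $p/R^2$ and substitute it for the boundary term in Step 1. After optimizing $\varepsilon$, one should arrive at
\[
0\ge\int_M K(x)|\omega|^2e^{-f},
\]
with $K$ satisfying the lower bound displayed before the theorem. By \eqref{eq:main-pinching} the first summand of that bound is nonnegative. Since $|x|<R$ on $\Int M$ and $p\ge1$, the second summand is strictly positive there, so $\omega=0$ on $\Int M$ and hence everywhere. Weighted Hodge theory with absolute boundary conditions gives $\Harm^p_{N,f}(M)\cong H^p(M;\R)$, so $H^p(M;\R)=0$.

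The main obstacle is Step 3: choosing $\varepsilon$ (with weight $p/R^2$) so that the gradient term is exactly absorbed by $|\nabla\omega|^2$. The bulk coefficient then has to come out as $\tfrac p{R^2}(n-p)+pc-p|A|^2-\tfrac{pc|x|^2}{R^2}-\tfrac{p^2|x|^2}{R^4}$, which regroups into the stated form of $K$. The first choice to try is $\varepsilon=R^2/p$ with the plain Kato inequality: the gradient term is then absorbed exactly, and the cross term contributes $-\tfrac{p^2|x^T|^2}{R^4}|\omega|^2\ge-\tfrac{p^2|x|^2}{R^4}|\omega|^2$. A second point to verify is the sign bookkeeping of the Gaussian factor $-pc|x|^2/R^2$ against $pc$ from $\mathcal B_f$.
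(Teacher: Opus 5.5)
Your proposal is correct and follows the paper's proof essentially step for step: the boundary reduction $\frac12\partial_\nu|\omega|^2=-\frac pR|\omega|^2$, the cancellation $\mathcal B_f^{[p]}=cp\,\Id-\sum_\alpha(S_\alpha^{[p]})^2$, the weighted Hardy identity from $\divf(x^T)=n-c|x|^2$ multiplied by $p/R^2$ with $\varepsilon=R^2/p$ and the plain Kato inequality, and finally $|x^T|^2\le|x|^2$. Two small corrections: the refined Kato inequality you mention is not available for $f$-coclosed forms, since $\delta_f\omega=0$ does not give $\delta\omega=0$, so rely only on the plain one, which suffices; and the bulk coefficient you display should begin with $\frac{p}{R^2}n$ rather than $\frac p{R^2}(n-p)$, which is what your computation actually produces and what regroups into the stated lower bound for $K$.
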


\begin{remark}\label{rem:p-range}
The range $1\le p<n$ is the natural range produced by the argument.  If one wants a statement closer to the classical pinching theorems, one may restrict to $1\le p\le \lfloor n/2\rfloor$ and then use duality separately for relative cohomology.  The present theorem concerns absolute cohomology and tangential $f$-harmonic forms.
\end{remark}

In dimension two, this gives the following topological consequence.

\begin{corollary}\label{cor:surface}
Let $\Sigma^2\subset \overline{\B_R^{2+k}}$ be a compact connected orientable free boundary $f$-minimal surface, where $f(x)=c|x|^2/2$ and $c\ge 0$.  If
\[
        |A|^2\le \frac1{R^2},
\]
then $\Sigma$ is homeomorphic to the disk $D^2$.
\end{corollary}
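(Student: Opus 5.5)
The plan is to specialize Theorem \ref{thm:main} to $n=2$, $p=1$ and then finish with the classification of compact surfaces. With $n=2$ and $p=1$ we have $1\le p<n$, and the pinching condition \eqref{eq:main-pinching} reads $|A|^2\le (2-1)/R^2=1/R^2$. This is exactly the hypothesis of the corollary.

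First I check the remaining hypotheses of Theorem \ref{thm:main}. I read ``free boundary $f$-minimal surface in $\overline{\B_R^{2+k}}$'' in the same sense as in the theorem: $\partial\Sigma\neq\emptyset$, $\partial\Sigma\subset\partial\B_R^{2+k}$ meeting the sphere orthogonally, and $\Int(\Sigma)\subset\B_R^{2+k}$. If the interior were allowed to touch the sphere, I would first apply a maximum-principle argument to exclude interior tangency with $\partial\B_R$. Here I simply adopt the standing convention of the theorem. Theorem \ref{thm:main} then gives $\Harm^1_{N,f}(\Sigma)=0$, and by weighted Hodge theory with absolute boundary condition, $H^1(\Sigma;\R)=0$.

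Next I use the topology of $\Sigma$. It is a compact, connected, orientable surface with nonempty boundary. By the classification of such surfaces, $\Sigma$ is homeomorphic to a closed orientable surface of genus $g\ge 0$ with $b\ge 1$ open disks removed. For this surface, $\Sigma$ deformation retracts onto a wedge of $2g+b-1$ circles, so
\[
        \dim H^1(\Sigma;\R)=2g+b-1 .
\]
Setting this equal to $0$ with $b\ge1$ forces $g=0$ and $b=1$. Hence $\Sigma$ is a sphere with one disk removed, that is, $\Sigma$ is homeomorphic to $D^2$.

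No step here is a real obstacle, since all the analytic work is contained in Theorem \ref{thm:main}. The only point needing care is ensuring that $\partial\Sigma\ne\emptyset$ and $\Int(\Sigma)\subset\B_R^{2+k}$, which I take as part of the definition of free boundary. This matters for $c>0$, because closed $f$-minimal surfaces such as the round sphere of radius $\sqrt{2/c}$ do exist.
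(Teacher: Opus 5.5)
Your proof is correct and is essentially the paper's argument: you apply Theorem \ref{thm:main} with $n=2$, $p=1$ to get $H^1(\Sigma;\R)=0$, then use $b_1=2g+b-1$ to force $g=0$ and $b=1$. Your explicit observation that $\partial\Sigma\neq\emptyset$ and $\Int(\Sigma)\subset\B_R^{2+k}$ are needed is a point the paper leaves implicit; it matters both for invoking the theorem and for the formula $b_1=2g+b-1$ to hold.
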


\begin{proof}
Theorem \ref{thm:main} gives $H^1(\Sigma;\R)=0$.  If an orientable compact connected surface has genus $g$ and $b$ boundary components, then $b_1=2g+b-1$.  Hence $0=2g+b-1$, which forces $g=0$ and $b=1$.  Thus $\Sigma$ is a disk.
\end{proof}

\section{Preliminaries}

We collect the notation and sign conventions used throughout the paper.  Let
\[
        M^n\subset \R^{n+k}
\]
be a compact orientable submanifold with boundary.  We denote by $\bar\nabla$ the Euclidean connection and by $\nabla$ the Levi-Civita connection of $M$.  The second fundamental form is
\[
        A(X,Y)=\bar\nabla_XY-\nabla_XY,
\]
and the non-normalized mean curvature vector is
\[
        \vec H=\tr A=\sum_{i=1}^n A(e_i,e_i).
\]
We use this non-normalized mean curvature vector throughout the proof.
Unless otherwise stated, $\{e_i\}_{i=1}^n$ denotes a local orthonormal tangent frame and $\{\nu_\alpha\}_{\alpha=1}^k$ denotes a local orthonormal normal frame.  We write
\[
        A_\alpha(X,Y)=\langle A(X,Y),\nu_\alpha\rangle,
        \qquad
        \langle S_\alpha X,Y\rangle=A_\alpha(X,Y),
\]
where $S_\alpha$ is the shape operator in the normal direction $\nu_\alpha$.  Set
\[
        h_\alpha=\tr S_\alpha=\langle \vec H,\nu_\alpha\rangle.
\]
Then
\[
        |A|^2=\sum_\alpha |S_\alpha|^2.
\]

Let $f\in C^\infty(\R^{n+k})$ be the potential of the weighted measure $e^{-f}\dd V$.  For a vector field $X$ tangent to $M$, the weighted divergence is
\[
        \divf X=\operatorname{div}X-\langle \nabla^M f,X\rangle.
\]
Equivalently, it is characterized by the weighted integration-by-parts formula
\begin{equation}\label{eq:weighted-div-thm}
        \int_M \divf X\,e^{-f}\dd V_M
        =\int_{\partial M}\langle X,\eta\rangle e^{-f}\dd V_{\partial M},
\end{equation}
where $\eta$ is the outward unit conormal of $\partial M$ in $M$.  This convention is consistent with the classical integral-formula framework for Riemannian geometry \cite{Yano1970IntegralFormulas}, with the only change being the weighted measure and the drift term $-\langle \nabla^Mf,X\rangle$.

For differential forms we use the weighted codifferential
\[
        \delta_f=\delta+i_{\nabla^M f}
\]
and the weighted Hodge Laplacian
\[
        \Delta_f=d\delta_f+\delta_f d.
\]
With this convention, on functions,
\[
        \Delta_f=-\divf\nabla.
\]
The Bakry--Emery tensor on $M$ is
\[
        \Ric_f=\Ric_M+\Hess_M f.
\]
The weighted Bochner--Weitzenb\"ock formula for an $f$-harmonic $p$-form $\omega$, i.e. a form satisfying $d\omega=0$ and $\delta_f\omega=0$, is written as
\begin{equation}\label{eq:weighted-bochner}
        -\frac12\Delta_f |\omega|^2
        =|\nabla\omega|^2+\langle \mathcal B_f^{[p]}\omega,\omega\rangle.
\end{equation}
The unweighted form of this identity is the analytic core of the Bochner technique \cite{Wu1988Bochner}; in the weighted setting, the curvature term is modified by the Hessian of the potential.

We denote the space of tangential $f$-harmonic $p$-forms by
\begin{equation}\label{eq:Hnf-def}
\Harm^p_{N,f}(M)
=
\{\omega\in \Omega^p(M): d\omega=0,\ \delta_f\omega=0\text{ in }M,
\ i_\eta\omega=0\text{ on }\partial M\}.
\end{equation}
Equivalently, these are the weighted harmonic representatives satisfying the absolute boundary condition
\[
        i_\eta\omega=0,
        \qquad
        i_\eta d\omega=0
        \qquad\text{on }\partial M,
\]
where the second condition is automatic in \eqref{eq:Hnf-def} since $d\omega=0$.

\begin{proposition}[Weighted Hodge theorem with absolute boundary condition]\label{prop:weighted-hodge}
Let $M$ be a compact smooth Riemannian manifold with boundary and let $f\in C^\infty(M)$.  Then each absolute de Rham cohomology class has a unique representative $\omega$ satisfying
\[
        d\omega=0,
        \qquad
        \delta_f\omega=0,
        \qquad
        i_\eta\omega=0
        \quad\text{on }\partial M.
\]
Consequently,
\begin{equation}\label{eq:weighted-hodge-iso}
        \Harm^p_{N,f}(M)\cong H^p(M;\R).
\end{equation}
\end{proposition}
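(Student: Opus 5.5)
The plan is to reduce the weighted statement to the classical Hodge--Morrey--Friedrichs theory for the absolute boundary problem, by a conjugation that turns the weighted codifferential into the unweighted one for a conformally related metric. The cleanest route avoids conformal changes and works directly with the weighted $L^2$ inner product $(\alpha,\beta)_f=\int_M\langle\alpha,\beta\rangle e^{-f}\dd V_M$. The first step is to check that $\delta_f=\delta+i_{\nabla^M f}$ is the formal adjoint of $d$ with respect to $(\cdot,\cdot)_f$. Green's formula then reads
\[
        (d\alpha,\beta)_f=(\alpha,\delta_f\beta)_f+\int_{\partial M}\langle \eta^\flat\wedge\alpha,\beta\rangle e^{-f}\dd V_{\partial M}.
\]
This follows from $\delta(e^{-f}\beta)=e^{-f}\delta_f\beta$ and the unweighted Green formula.

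Next, I would establish the weighted Hodge--Morrey decomposition with absolute boundary conditions:
\[
        \Omega^p(M)=d\Omega^{p-1}(M)\oplus\delta_f\Omega^{p+1}_N(M)\oplus\Harm^p_{N,f}(M),
\]
where $\Omega_N$ denotes forms with $i_\eta\omega=0$ on $\partial M$. The orthogonality is with respect to $(\cdot,\cdot)_f$. Orthogonality of the three pieces follows immediately from the Green formula above together with the boundary conditions. Existence and completeness come from elliptic theory for the weighted Dirichlet-type form
\[
        D_f(\omega,\omega)=\|d\omega\|_f^2+\|\delta_f\omega\|_f^2
\]
on $H^1$ forms with $i_\eta\omega=0$. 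This form differs from the unweighted one by zeroth-order terms and an equivalent measure, since $e^{-f}$ is bounded above and below on compact $M$. Hence Gaffney's inequality, the compactness of the embedding, and the Lopatinskii--Shapiro condition for the absolute boundary problem all transfer verbatim from Schwarz's treatment. The consequences are a finite-dimensional kernel $\Harm^p_{N,f}$, a Fredholm operator, and smooth regularity up to the boundary.

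Given the decomposition, the proof runs as follows. Let $\theta$ be a closed form. Write $\theta=d\alpha+\delta_f\beta+h$. Because $\theta$ and $h$ are closed, $\delta_f\beta$ is closed. Pairing it with itself, Green's formula gives
\[
        \|\delta_f\beta\|_f^2=(\beta,d\delta_f\beta)_f=0,
\]
where the boundary term vanishes since $i_\eta\beta=0$. So $\theta=d\alpha+h$ and $h$ represents $[\theta]$. For uniqueness, suppose $h=d\alpha$ is exact and lies in $\Harm_{N,f}$. Then $\|h\|_f^2=(\alpha,\delta_f h)_f+\text{boundary}$, and the boundary term involves $\langle\eta^\flat\wedge\alpha,h\rangle=\langle\alpha,i_\eta h\rangle=0$. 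Hence $h=0$. This yields the isomorphism \eqref{eq:weighted-hodge-iso}, using that de Rham cohomology of smooth forms on $M$ computes $H^p(M;\R)$.

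The main obstacle is the analytic part: the decomposition and its regularity in the weighted setting. I would handle it most economically by the unitary conjugation $\omega\mapsto e^{-f/2}\omega$. This map identifies $(\cdot,\cdot)_f$ with the unweighted $L^2$ product. It turns $d,\delta_f$ into $d+\tfrac12 df\wedge$ and $\delta+\tfrac12 i_{\nabla f}$, which are lower-order perturbations of $d,\delta$, and it preserves the condition $i_\eta\omega=0$. The boundary value problem then has the same principal symbol and boundary operator as Schwarz's, so ellipticity and regularity are inherited directly, while the kernel computation is exactly the one done above.
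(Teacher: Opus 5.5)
Your proposal is correct and follows essentially the same route as the paper. Both arguments use $\delta_f=e^{f}\delta e^{-f}$ as the weighted adjoint of $d$, transfer the standard Hodge--Morrey decomposition because the absolute problem keeps the same principal and boundary symbols, and read off the unique representative; you simply spell out the Green formula and the existence/uniqueness step that the paper delegates to the references. One small precision: after conjugating by $\psi=e^{-f/2}\omega$, the natural condition $i_\eta d\omega=0$ becomes $i_\eta d\psi+\tfrac12(\partial_\eta f)\psi=0$, so only the boundary symbol, not the boundary operator itself, coincides with Schwarz's (which is all the elliptic theory needs), whereas in the unconjugated variables the boundary operator $(i_\eta\omega,\,i_\eta d\omega)$ is literally unchanged.
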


\begin{proof}
The identity $\delta_f=e^f\delta e^{-f}$ shows that $\delta_f$ is the formal adjoint of $d$ with respect to the weighted inner product $\int_M\langle\cdot,\cdot\rangle e^{-f}$.  Since $e^{-f}$ is a positive smooth density and $M$ is compact, this weighted inner product is equivalent to the usual $L^2$ inner product.  The operator $\Delta_f=d\delta_f+\delta_f d$, together with the absolute boundary conditions $i_\eta\omega=0$ and $i_\eta d\omega=0$, is therefore a self-adjoint elliptic boundary problem with the same principal symbol and boundary symbol as the ordinary Hodge Laplacian.  Standard Hodge decomposition for elliptic boundary problems gives one and only one weighted harmonic representative in each absolute de Rham cohomology class; see the usual unweighted theory and its weighted analogues \cite{Bueler1999,IwaniecScottStroffolini1999,ChenGeZhang2023}.
\end{proof}

A compact submanifold $M^n\subset \overline{\B_R^{n+k}}$ is called a free boundary submanifold if
\[
        \partial M\subset \partial \B_R^{n+k},
        \qquad
        \Int(M)\subset \B_R^{n+k},
\]
and $M$ meets $\partial\B_R^{n+k}$ orthogonally along $\partial M$.  We use $\eta$ for the outward conormal of $\partial M\subset M$.  Thus, on the boundary of the Euclidean ball,
\begin{equation}\label{eq:free-boundary-xT}
        \eta=\frac{x}{R},
        \qquad
        x^T=R\eta
        \qquad\text{on }\partial M.
\end{equation}
This is opposite to the inward unit normal of the ambient ball used in some free boundary variational formulas; compare, for instance, the convention in \cite{ChenGeZhang2023}.  Here $x$ is the position vector and $x^T$ is its tangential projection onto $M$.

Finally, for the Gaussian potential
\[
        f(x)=\frac c2 |x|^2,
        \qquad c\ge 0,
\]
the $f$-minimal equation is
\[
        \vec H+(\bar\nabla f)^\perp=0.
\]
Since $\bar\nabla f=cx$, this gives
\begin{equation}\label{eq:H-vector-identity}
        \vec H=-cx^\perp.
\end{equation}
This identity is the source of the two cancellations used in the proof of Theorem \ref{thm:main}.

\section{A weighted boundary reduction}

The following lemma is the boundary input needed in the proof.  It says that, for tangential $f$-harmonic forms in a Gaussian-weighted ball, the local boundary term in the weighted Bochner formula is the same as the unweighted absolute-boundary term.

\begin{lemma}[Weighted boundary reduction]\label{lem:boundary-reduction}
Let $M^n\subset \overline{\B_R^{n+k}}$ be a compact free boundary submanifold and let
\[
        f(x)=\frac c2|x|^2,
        \qquad c\ge 0.
\]
If $\omega\in \Harm^p_{N,f}(M)$, then
\begin{equation}\label{eq:boundary-normal-derivative}
        \frac12\partial_\eta |\omega|^2
        =-\frac{p}{R}|\omega|^2
        \qquad\text{on }\partial M.
\end{equation}
Consequently,
\begin{equation}\label{eq:reilly-weighted}
        0
        =\frac{p}{R}\int_{\partial M}|\omega|^2e^{-f}
        +\int_M |\nabla\omega|^2e^{-f}
        +\int_M\langle \mathcal B_f^{[p]}\omega,\omega\rangle e^{-f}.
\end{equation}
\end{lemma}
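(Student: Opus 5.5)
The plan has two parts.  First we compute the normal derivative of $|\omega|^2$ on $\partial M$ pointwise, reducing it to second fundamental form data of $\partial M\subset M$.  Then we integrate the weighted Bochner formula \eqref{eq:weighted-bochner} against $e^{-f}$ and substitute that boundary value.

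For the pointwise identity \eqref{eq:boundary-normal-derivative}, work at a boundary point with an orthonormal frame $e_1,\dots,e_{n-1}$ tangent to $\partial M$ and $e_n=\eta$.  The condition $i_\eta\omega=0$ says that $\omega$ has only components $\omega_{i_1\cdots i_p}$ with all indices $\le n-1$.  Then
\[
\frac12\partial_\eta|\omega|^2=\langle\nabla_\eta\omega,\omega\rangle=\frac1{p!}\sum_{i_1,\dots,i_p<n}(\nabla_\eta\omega)_{i_1\cdots i_p}\,\omega_{i_1\cdots i_p}.
\]

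To express $\nabla_\eta\omega$ we use the two equations on $\omega$.
\begin{itemize}
\item From $d\omega=0$: $(\nabla_\eta\omega)_{i_1\cdots i_p}=\sum_j(\nabla_{e_{i_j}}\omega)_{i_1\cdots n\cdots i_p}$, with $n$ in the $j$-th slot and a sign from moving the index.
\item Differentiate $\omega(\eta,\cdot)=0$ along a boundary direction $e_i$: $(\nabla_{e_i}\omega)(\eta,\dots)=-\omega(\nabla_{e_i}\eta,\dots)$.
\item Since $\eta=x/R$ on $\partial M$ by \eqref{eq:free-boundary-xT}, $\nabla_{e_i}\eta=(\bar\nabla_{e_i}(x/R))^T=e_i/R$.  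So the second fundamental form of $\partial M$ in $M$ is $\frac1R$ times the identity.
\end{itemize}
Substituting, each of the $p$ slots contributes $-\frac1R\omega_{i_1\cdots i_p}$.  This gives $\nabla_\eta\omega=-\frac pR\omega$ on the tangential components, and hence \eqref{eq:boundary-normal-derivative}.

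Note that $\delta_f\omega=0$ is not needed for this tangential component, so the weight enters nowhere.  This is exactly the sense in which the reduction is "the same as the unweighted case."  The main obstacle I expect is sign bookkeeping in the $d\omega=0$ expansion, where the orientation of $\eta$ must be tracked carefully; I would check the case $p=1$ first, where $(\nabla_\eta\omega)(e_i)=(\nabla_{e_i}\omega)(\eta)=-\omega(\nabla_{e_i}\eta)=-\frac1R\omega(e_i)$.

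For \eqref{eq:reilly-weighted}, integrate \eqref{eq:weighted-bochner} against $e^{-f}\,dV_M$.  Using $\Delta_f=-\divf\nabla$ and the weighted divergence theorem \eqref{eq:weighted-div-thm},
\[
\int_M\Big(-\tfrac12\Delta_f|\omega|^2\Big)e^{-f}=\int_{\partial M}\tfrac12\partial_\eta|\omega|^2\,e^{-f}=-\frac pR\int_{\partial M}|\omega|^2e^{-f}.
\]
Equating this with the integral of the right-hand side of \eqref{eq:weighted-bochner} and moving the boundary term across yields \eqref{eq:reilly-weighted}.

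Smoothness of $\omega$ up to the boundary, which justifies these integrations, follows from elliptic regularity for the boundary problem in Proposition~\ref{prop:weighted-hodge}.
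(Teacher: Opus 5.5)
Your proposal is correct and follows the paper's proof: expand $i_\eta d\omega=0$, differentiate $i_\eta\omega=0$ along $\partial M$, use $\nabla_Y\eta=Y/R$ from $\eta=x/R$ to get $(\nabla_\eta\omega)(e_I)=-\frac pR\,\omega(e_I)$, and then integrate the weighted Bochner formula with the weighted divergence theorem. The net sign when $\eta$ is placed in the $j$-th slot is $+1$, as your $p=1$ check confirms, and your remark that $\delta_f\omega=0$ plays no role in the boundary identity is accurate; the paper records $\delta\omega=0$ on $\partial M$ only as context and does not use it in the derivation.
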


\begin{proof}
We first prove the pointwise boundary identity.  Fix a point $q\in \partial M$.  Choose a local orthonormal frame $e_1,\ldots,e_{n-1}$ tangent to $\partial M$ near $q$ and set $e_n=\eta$.  Because $M$ meets $\partial\B_R$ orthogonally and $\eta$ is the outward conormal of $M$,
\[
        \eta=\frac{x}{R}\quad\text{on }\partial M.
\]
Therefore, for every vector $Y$ tangent to $\partial M$,
\begin{equation}\label{eq:boundary-shape}
        \nabla_Y^M\eta=\frac1R Y.
\end{equation}
Thus $\partial M\subset M$ is totally umbilical with principal curvatures $1/R$ with respect to $\eta$.

Since $\omega\in\Harm^p_{N,f}(M)$, we have $i_\eta\omega=0$ on $\partial M$ and $d\omega=0$ in $M$.  In particular,
\[
        i_\eta d\omega=0\quad\text{on }\partial M.
\]
The following calculation is pointwise at $q$.  For tangential indices $I=(i_1,\ldots,i_p)$, the identity $d\omega(\eta,e_{i_1},\ldots,e_{i_p})=0$ gives
\begin{align*}
0
&=(d\omega)(\eta,e_{i_1},\ldots,e_{i_p}) \\
&=(\nabla_\eta\omega)(e_{i_1},\ldots,e_{i_p})
  +\sum_{s=1}^p(-1)^s(\nabla_{e_{i_s}}\omega)(\eta,e_{i_1},\ldots,\widehat{e_{i_s}},\ldots,e_{i_p}).
\end{align*}
To evaluate the second term, differentiate the boundary identity
\[
        \omega(\eta,e_{i_1},\ldots,\widehat{e_{i_s}},\ldots,e_{i_p})=0
\]
along the tangential direction $e_{i_s}$.  The terms containing $\nabla^{\partial M}_{e_{i_s}}e_{i_j}$ still contain an $\eta$-slot and therefore vanish.  Hence
\[
        (\nabla_{e_{i_s}}\omega)(\eta,e_{i_1},\ldots,\widehat{e_{i_s}},\ldots,e_{i_p})
        =-\omega(\nabla^M_{e_{i_s}}\eta,e_{i_1},\ldots,\widehat{e_{i_s}},\ldots,e_{i_p}).
\]
Moving the first argument back to the $s$-th slot cancels the alternating sign in the exterior derivative.  Thus
\[
0=(\nabla_\eta\omega)(e_{i_1},\ldots,e_{i_p})
  +\sum_{s=1}^p \omega(e_{i_1},\ldots,\nabla^M_{e_{i_s}}\eta,\ldots,e_{i_p}).
\]
By \eqref{eq:boundary-shape}, this becomes
\[
        (\nabla_\eta\omega)(e_{i_1},\ldots,e_{i_p})
        =-\frac pR\omega(e_{i_1},\ldots,e_{i_p}).
\]
Since $\omega$ has no normal component at the boundary, this implies
\[
        \langle \nabla_\eta\omega,\omega\rangle
        =-\frac pR|\omega|^2,
\]
which is \eqref{eq:boundary-normal-derivative}.

We also record why the weighted boundary condition reduces to the usual local absolute-boundary algebra.  On $\partial M$, \eqref{eq:free-boundary-xT} gives
\[
        \nabla^M f=cx^T=cR\eta.
\]
Hence
\[
        i_{\nabla^M f}\omega=cR\,i_\eta\omega=0
        \qquad\text{on }\partial M.
\]
Since $\delta_f\omega=\delta\omega+i_{\nabla^M f}\omega=0$ holds smoothly up to the boundary, it follows that
\[
        \delta\omega=0\qquad\text{on }\partial M.
\]
Thus, pointwise on the boundary, $\omega$ satisfies the same local algebra as an unweighted harmonic form with absolute boundary condition, as in the classical Reilly--Yano formula for forms and its modern Reilly-type variants \cite{RaulotSavo2011,Yano1970IntegralFormulas,ChenGe2022}.

Now integrate the weighted Bochner formula \eqref{eq:weighted-bochner}.  Since our scalar convention is $\Delta_f=-\divf\nabla$, the weighted divergence theorem gives
\[
        \int_M -\Delta_f\left(\frac{|\omega|^2}{2}\right)e^{-f}
        =\int_{\partial M}\frac12\partial_\eta |\omega|^2 e^{-f}.
\]
Using \eqref{eq:boundary-normal-derivative}, we get
\[
        \int_M\left(|\nabla\omega|^2+
        \langle \mathcal B_f^{[p]}\omega,\omega\rangle\right)e^{-f}
        =-\frac pR\int_{\partial M}|\omega|^2e^{-f},
\]
which is \eqref{eq:reilly-weighted}.
\end{proof}

\section{A weighted Hardy inequality}

The next lemma is the weighted Hardy inequality adapted to the Gaussian free boundary setting.  It is the weighted analogue of Hardy-type inequalities for submanifolds and of the boundary Hardy inequalities used in unweighted free boundary cohomology vanishing theorems \cite{BatistaMirandolaVitorio2017,ChenGe2022}.

\begin{lemma}\label{lem:weighted-hardy}
Let $M^n\subset \overline{\B_R^{n+k}}$ be a compact free boundary $f$-minimal submanifold with
\[
        f(x)=\frac c2|x|^2,
        \qquad c\ge 0.
\]
Then for every $u\in H^1(M)$ and every $t>0$,
\begin{equation}\label{eq:weighted-hardy}
R\int_{\partial M}u^2e^{-f}
\ge
\int_M\left[-t|\nabla u|^2+
\left(n-c|x|^2-\frac1t|x^T|^2\right)u^2\right]e^{-f}.
\end{equation}
\end{lemma}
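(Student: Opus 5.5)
The plan is to apply the weighted divergence theorem \eqref{eq:weighted-div-thm} to the tangent vector field $X=u^2x^T$. First we establish the identity \eqref{eq:weighted-div-position-intro}, namely $\divf(x^T)=n-c|x|^2$. For this we write $x^T=x-x^\perp$. A standard computation gives $\operatorname{div}_M(x^T)=n+\langle \vec H,x\rangle$, since $\operatorname{div}_M x=n$ and $\operatorname{div}_M(x^\perp)=-\langle\vec H,x^\perp\rangle$ under the non-normalized convention. By \eqref{eq:H-vector-identity} we have $\vec H=-cx^\perp$, so $\langle\vec H,x\rangle=-c|x^\perp|^2$. The drift term is $-\langle\nabla^Mf,x^T\rangle=-c|x^T|^2$ because $\nabla^Mf=cx^T$. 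Adding these,
\[
\divf(x^T)=n-c|x^\perp|^2-c|x^T|^2=n-c|x|^2 .
\]
This is exactly where $f$-minimality is used.

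Next we compute
\[
\divf(u^2x^T)=u^2(n-c|x|^2)+2u\langle\nabla u,x^T\rangle .
\]
On $\partial M$, \eqref{eq:free-boundary-xT} gives $\langle x^T,\eta\rangle=R$. Integrating with respect to $e^{-f}$ then yields
\[
R\int_{\partial M}u^2e^{-f}=\int_M\bigl[(n-c|x|^2)u^2+2u\langle\nabla u,x^T\rangle\bigr]e^{-f}.
\]
To handle the cross term we use Young's inequality in the form
\[
2u\langle\nabla u,x^T\rangle\ge -t|\nabla u|^2-\tfrac1t|x^T|^2u^2
\]
for any $t>0$. Substituting this gives \eqref{eq:weighted-hardy} for smooth $u$.

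The identity is proved first for $u\in C^\infty(M)$ up to the boundary. It then extends to $u\in H^1(M)$ by density, together with continuity of the trace $H^1(M)\to L^2(\partial M)$. This uses that $e^{-f}$ is bounded above and below on the compact set $M$ and that $|x|\le R$ there.

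The only genuinely delicate point is the divergence identity. Specifically, one has to get the sign of $\operatorname{div}_M(x^\perp)$ right under the non-normalized convention $\vec H=\tr A$, and check that it combines with $\vec H=-cx^\perp$ so that $|x^\perp|^2$ and $|x^T|^2$ merge into $|x|^2$. Everything else is routine.
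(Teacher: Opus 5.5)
Your proposal is correct and follows the paper's proof essentially step for step. Both arguments derive $\divf(x^T)=n-c|x|^2$ from $\operatorname{div}(x^T)=n+\langle x,\vec H\rangle$, $\vec H=-cx^\perp$, and $\nabla^Mf=cx^T$; both then apply the weighted divergence theorem to $u^2x^T$ with $\langle x^T,\eta\rangle=R$, use Young's inequality on the cross term, and pass to $H^1(M)$ by density and continuity of the trace.
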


\begin{proof}
We first justify the computation for smooth $u$.  In Euclidean space,
\[
        \operatorname{div}(x^T)=n+\langle x,\vec H\rangle.
\]
Using the $f$-minimal identity \eqref{eq:H-vector-identity},
\[
        \operatorname{div}(x^T)=n-c|x^\perp|^2.
\]
Since $\nabla^M f=cx^T$,
\[
        \langle \nabla^M f,x^T\rangle=c|x^T|^2.
\]
Therefore
\begin{equation}\label{eq:weighted-div-position}
        \divf(x^T)=n-c|x^\perp|^2-c|x^T|^2=n-c|x|^2.
\end{equation}
Applying \eqref{eq:weighted-div-thm} to the vector field $u^2x^T$, and using $\langle x^T,\eta\rangle=R$ on $\partial M$, gives
\[
R\int_{\partial M}u^2e^{-f}
=
\int_M\left(2u\langle \nabla u,x^T\rangle+(n-c|x|^2)u^2\right)e^{-f}.
\]
Finally,
\[
        2u\langle \nabla u,x^T\rangle
        \ge -t|\nabla u|^2-\frac1t|x^T|^2u^2,
\]
and \eqref{eq:weighted-hardy} follows for smooth $u$.  For $u\in H^1(M)$, choose $u_j\in C^\infty(M)$ with $u_j\to u$ in $H^1(M)$.  The trace map $H^1(M)\to L^2(\partial M)$ is continuous, and the coefficients $|x|^2$, $|x^T|^2$, and $e^{-f}$ are smooth and bounded on the compact manifold $M$.  Therefore every term in the inequality passes to the limit, proving the $H^1$ statement.
\end{proof}

\section{The weighted Weitzenb\"ock curvature operator}

We now record the cancellation in the $f$-Bochner curvature term.  Related extrinsic Weitzenb\"ock lower bounds underlie Hodge--Laplacian estimates and rigidity results for submanifolds \cite{CuiSun2019,RaulotSavo2011,Savo2014}.  For a symmetric endomorphism $S:TM\to TM$, let $S^{[p]}$ be its natural action on $p$-forms, defined by
\[
(S^{[p]}\omega)(X_1,\ldots,X_p)
=
\sum_{j=1}^p \omega(X_1,\ldots,SX_j,\ldots,X_p).
\]
If $S$ is diagonal with eigenvalues $\lambda_1,\ldots,\lambda_n$, then $S^{[p]}$ is diagonal on the basis $e_{i_1}^*\wedge\cdots\wedge e_{i_p}^*$ with eigenvalues $\lambda_{i_1}+\cdots+\lambda_{i_p}$.

\begin{lemma}\label{lem:Bf-bound}
Let $M^n\subset \R^{n+k}$ be $f$-minimal for
\[
        f(x)=\frac c2|x|^2.
\]
Then the weighted Weitzenb\"ock curvature operator on $p$-forms satisfies
\begin{equation}\label{eq:Bf-exact}
        \mathcal B_f^{[p]}
        =cp\,\Id-\sum_\alpha (S_\alpha^{[p]})^2.
\end{equation}
In particular, for every $p$-form $\omega$,
\begin{equation}\label{eq:Bf-lower}
        \langle \mathcal B_f^{[p]}\omega,\omega\rangle
        \ge p(c-|A|^2)|\omega|^2.
\end{equation}
\end{lemma}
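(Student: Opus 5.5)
The plan is to start from the weighted Weitzenb\"ock formula in the form $\mathcal B_f^{[p]}=\Ric^{[p]}_M+(\Hess_M f)^{[p]}$. Here $\Ric^{[p]}_M$ denotes the classical Bochner curvature operator on $p$-forms. The extra term $(\Hess_M f)^{[p]}$ comes from differentiating the drift $i_{\nabla^Mf}$ in $\Delta_f=d\delta_f+\delta_f d$, which is the standard identity $\Delta_f=\Delta+\mathcal L_{\nabla f}$ combined with $\mathcal L_{\nabla f}\omega=\nabla_{\nabla f}\omega+(\Hess f)^{[p]}\omega$. I will take this decomposition as the defining convention behind \eqref{eq:weighted-bochner}. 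The proof then reduces to computing the two pieces for a submanifold of Euclidean space and observing a cancellation.

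\textbf{The intrinsic piece.} By the Gauss equation with flat ambient space,
\[
R(X,Y,Z,W)=\sum_\alpha\bigl(A_\alpha(X,W)A_\alpha(Y,Z)-A_\alpha(X,Z)A_\alpha(Y,W)\bigr).
\]
Substituting this into the Weitzenb\"ock curvature term gives the classical extrinsic formula
\[
\Ric^{[p]}_M=\sum_\alpha\bigl(h_\alpha S_\alpha^{[p]}-(S_\alpha^{[p]})^2\bigr).
\]
This formula is used, for example, in Savo and in Raulot--Savo. To verify it, I would diagonalize a single $S_\alpha$ with eigenvalues $\lambda_i$. On $e_I^*$ the curvature term of that normal direction equals
\[
\sum_{i\in I,j\notin I}\lambda_i\lambda_j=\Bigl(\sum_{i\in I}\lambda_i\Bigr)\Bigl(h_\alpha-\sum_{i\in I}\lambda_i\Bigr),
\]
and this is exactly the eigenvalue of $h_\alpha S^{[p]}_\alpha-(S^{[p]}_\alpha)^2$ on $e_I^*$. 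Since both sides are sums over $\alpha$ of operators that are quadratic in $S_\alpha$, the identity holds without simultaneous diagonalization.

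\textbf{The Hessian piece.} For $f=\frac c2|x|^2$, we have $\nabla^Mf=cx^T$. Differentiating in $M$ gives
\[
\Hess_Mf(X,Y)=c\langle X,Y\rangle+\langle \bar\nabla f,A(X,Y)\rangle=c\langle X,Y\rangle+c\langle x^\perp,A(X,Y)\rangle.
\]
By \eqref{eq:H-vector-identity} we have $cx^\perp=-\vec H$, so the second term becomes $-\sum_\alpha h_\alpha A_\alpha(X,Y)$. Hence
\[
(\Hess_Mf)^{[p]}=cp\,\Id-\sum_\alpha h_\alpha S_\alpha^{[p]}.
\]
Adding this to the intrinsic piece cancels the trace terms and yields \eqref{eq:Bf-exact}.

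\textbf{The lower bound.} For \eqref{eq:Bf-lower} it suffices to show $\langle (S^{[p]})^2\omega,\omega\rangle=|S^{[p]}\omega|^2\le p|S|^2|\omega|^2$ for each symmetric $S$, and then sum over $\alpha$. Diagonalizing $S$, the eigenvalue on $e_I^*$ is $\sum_{i\in I}\lambda_i$. By Cauchy--Schwarz,
\[
\Bigl(\sum_{i\in I}\lambda_i\Bigr)^2\le p\sum_{i\in I}\lambda_i^2\le p|S|^2.
\]
Because $S^{[p]}$ is self-adjoint and diagonal in this basis, the operator bound $(S^{[p]})^2\le p|S|^2\Id$ follows, and summing over $\alpha$ gives $p|A|^2$.

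\textbf{Main obstacle.} I expect the main difficulty to be keeping sign and normalization conventions consistent. The Hessian correction must enter $\mathcal B_f$ with a plus sign, matching $\Ric_f=\Ric+\Hess f$. The extrinsic Weitzenb\"ock formula must also be normalized so that on $1$-forms it reduces to
\[
\Ric(X,X)=\sum_\alpha\bigl(h_\alpha A_\alpha(X,X)-|S_\alpha X|^2\bigr).
\]
I would check both conventions in the case $p=1$ first, since there $\mathcal B_f^{[1]}=\Ric_f$ directly, before trusting the general-$p$ formula.
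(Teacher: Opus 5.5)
Your proposal is correct and follows essentially the same route as the paper: the decomposition $\mathcal B_f^{[p]}=\mathcal B_{\rm ext}^{[p]}+\Hess_M^{[p]}f$ coming from $\Delta_f=\Delta+\mathcal L_{\nabla^M f}$, the extrinsic formula verified by diagonalizing each $S_\alpha$ separately, the Hessian computation in which $cx^\perp=-\vec H$ cancels the trace terms, and the Cauchy--Schwarz bound on the eigenvalues of $S_\alpha^{[p]}$. Your explicit Gauss-equation check $\sum_{i\in I,\,j\notin I}\lambda_i\lambda_j=\bigl(\sum_{i\in I}\lambda_i\bigr)\bigl(h_\alpha-\sum_{i\in I}\lambda_i\bigr)$ supplies a computation that the paper states without derivation.
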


\begin{proof}
For a Euclidean submanifold, the ordinary Bochner curvature term is entirely extrinsic, because the ambient Euclidean curvature vanishes.  The Gauss-type splitting for an isometric immersion gives the pointwise formula
\begin{equation}\label{eq:extrinsic-bochner}
        \mathcal B_{\rm ext}^{[p]}
        =\sum_\alpha\left(h_\alpha S_\alpha^{[p]}-(S_\alpha^{[p]})^2\right).
\end{equation}
Equivalently, for a fixed normal direction one may diagonalize $S_\alpha$ at the point.  If $S_\alpha e_i=\lambda_i e_i$ and $I=\{i_1<\cdots<i_p\}$, then $S_\alpha^{[p]}$ has eigenvalue $\lambda_{i_1}+\cdots+\lambda_{i_p}$ on $e^{i_1}\wedge\cdots\wedge e^{i_p}$, and the contribution to the Bochner term is
\[
        h_\alpha(\lambda_{i_1}+\cdots+\lambda_{i_p})
        -(\lambda_{i_1}+\cdots+\lambda_{i_p})^2.
\]
Summing over an orthonormal normal frame gives \eqref{eq:extrinsic-bochner}; no flatness of the normal bundle or simultaneous diagonalization of different $S_\alpha$ is required \cite{Savo2014,ChenGe2022}.

We next identify the weighted zero-order term.  Since
\[
        \delta_f=\delta+i_{\nabla^M f},
\]
Cartan's formula gives
\[
        \Delta_f=d\delta_f+\delta_f d
        =\Delta+\mathcal L_{\nabla^M f}.
\]
For a $p$-form,
\[
        \mathcal L_{\nabla^M f}\omega
        =\nabla_{\nabla^M f}\omega+\Hess_M^{[p]}f(\omega).
\]
Thus the first-order term $\nabla_{\nabla^M f}$ is part of the weighted rough Laplacian, and the zero-order curvature term in the weighted Weitzenb\"ock formula is
\begin{equation}\label{eq:Bf-Bext-hess}
        \mathcal B_f^{[p]}=\mathcal B_{\rm ext}^{[p]}+\Hess_M^{[p]}f.
\end{equation}
This is the sign convention corresponding to $\delta_f=\delta+i_{\nabla^M f}$.

It remains to compute $\Hess_M^{[p]}f$.  For tangent vectors $X,Y$,
\[
        \Hess_M f(X,Y)
        =c\langle X,Y\rangle+\langle cx^\perp,A(X,Y)\rangle.
\]
Using $cx^\perp=-\vec H=-\sum_\alpha h_\alpha\nu_\alpha$, this becomes
\[
        \Hess_M f(X,Y)
        =c\langle X,Y\rangle-\sum_\alpha h_\alpha\langle S_\alpha X,Y\rangle.
\]
Since the identity endomorphism acts on $p$-forms as $p\,\Id$, we get
\begin{equation}\label{eq:hess-p-action}
        \Hess_M^{[p]}f
        =cp\,\Id-\sum_\alpha h_\alpha S_\alpha^{[p]}.
\end{equation}
Adding \eqref{eq:extrinsic-bochner} and \eqref{eq:hess-p-action} in \eqref{eq:Bf-Bext-hess} gives \eqref{eq:Bf-exact}.

It remains to estimate the last term.  For a symmetric endomorphism $S$, diagonalizing $S$ gives
\[
        |S^{[p]}\omega|^2\le p|S|^2|\omega|^2.
\]
Indeed, each eigenvalue of $S^{[p]}$ is a sum of $p$ eigenvalues of $S$, and
\[
        (\lambda_{i_1}+\cdots+\lambda_{i_p})^2
        \le p(\lambda_{i_1}^2+\cdots+\lambda_{i_p}^2)
        \le p|S|^2.
\]
Therefore
\[
        \sum_\alpha |S_\alpha^{[p]}\omega|^2
        \le p\sum_\alpha |S_\alpha|^2|\omega|^2
        =p|A|^2|\omega|^2.
\]
Together with \eqref{eq:Bf-exact}, this proves \eqref{eq:Bf-lower}.
\end{proof}

\section{Proof of the main theorem}

We now prove Theorem \ref{thm:main}.  Let
\[
        \omega\in \Harm^p_{N,f}(M)
\]
and set
\[
        u=|\omega|.
\]
By the weak Kato inequality, $u\in H^1(M)$ and
\begin{equation}\label{eq:kato}
        |\nabla u|\le |\nabla\omega|
        \qquad\text{a.e. on }M.
\end{equation}
Combining Lemma \ref{lem:boundary-reduction}, Lemma \ref{lem:Bf-bound}, and \eqref{eq:kato}, we get
\begin{equation}\label{eq:start-main-proof}
0
\ge
\frac pR\int_{\partial M}u^2e^{-f}
+
\int_M |\nabla u|^2e^{-f}
+
\int_M p(c-|A|^2)u^2e^{-f}.
\end{equation}

By Lemma \ref{lem:weighted-hardy}, for any $t>0$,
\begin{equation}\label{eq:hardy-insert}
\frac pR\int_{\partial M}u^2e^{-f}
\ge
\int_M\left[-\frac{pt}{R^2}|\nabla u|^2
+\frac p{R^2}\left(n-c|x|^2-\frac1t|x^T|^2\right)u^2\right]e^{-f}.
\end{equation}
The right-hand side of \eqref{eq:start-main-proof} is bounded below by the expression obtained by replacing the boundary term with the right-hand side of \eqref{eq:hardy-insert}.  Hence \eqref{eq:start-main-proof} implies
\begin{align}\label{eq:assembled}
0
\ge{}&
\int_M\left(1-\frac{pt}{R^2}\right)|\nabla u|^2e^{-f}
\nonumber\\
&+
\int_M\left[
        p(c-|A|^2)
        +\frac p{R^2}\left(n-c|x|^2-\frac1t|x^T|^2\right)
\right]u^2e^{-f}.
\end{align}
Choose
\begin{equation}\label{eq:t-choice}
        t=\frac{R^2}{p}.
\end{equation}
Then the gradient term disappears.  Hence
\begin{equation}\label{eq:K-integral}
        0\ge \int_M K(x)u^2e^{-f},
\end{equation}
where
\begin{equation}\label{eq:K-def}
        K(x)=p(c-|A|^2)+\frac p{R^2}\left(n-c|x|^2-\frac p{R^2}|x^T|^2\right).
\end{equation}
Using the sharper estimate
\[
        |x^T|^2\le |x|^2,
\]
we have the pointwise lower bound
\begin{align}\label{eq:K-lower}
K(x)
&\ge p(c-|A|^2)
+\frac p{R^2}\left(n-c|x|^2-\frac p{R^2}|x|^2\right) \nonumber\\
&=\frac p{R^2}\bigl(n-p-R^2|A|^2\bigr)
+\left(pc+\frac{p^2}{R^2}\right)
\left(1-\frac{|x|^2}{R^2}\right).
\end{align}
Under the pinching assumption \eqref{eq:main-pinching}, the first term on the right-hand side of \eqref{eq:K-lower} is nonnegative.  Moreover, because $c\ge 0$, $p>0$, and $\Int(M)\subset \B_R^{n+k}$,
\[
        \left(pc+\frac{p^2}{R^2}\right)
        \left(1-\frac{|x|^2}{R^2}\right)>0
        \qquad\text{on }\Int(M).
\]
Therefore $K(x)>0$ on $\Int(M)$.  The integrand $K u^2 e^{-f}$ is nonnegative and must vanish a.e. in $M$ by \eqref{eq:K-integral}.  Since $\partial M$ has zero $n$-dimensional measure and $K>0$ in the interior, $u=0$ a.e. in $\Int(M)$.  By smoothness of $\omega$, this gives $u\equiv0$ on $M$, and hence $\omega=0$.

Since every element of $\Harm^p_{N,f}(M)$ vanishes, we have
\[
        \Harm^p_{N,f}(M)=0.
\]
By the weighted Hodge isomorphism \eqref{eq:weighted-hodge-iso},
\[
        H^p(M;\R)=0.
\]
This proves Theorem \ref{thm:main}.

\section{Examples and equality comments}

\begin{example}[Flat equatorial disks]
Let
\[
        M=\overline{\B_R^n}\times\{0\}\subset \overline{\B_R^{n+k}}.
\]
Then $x^\perp=0$, $A=0$, and $\vec H=0$, so $M$ is free boundary and $f$-minimal for every quadratic potential $f(x)=c|x|^2/2$ with $c\ge 0$.  The pinching condition in Theorem \ref{thm:main} is satisfied for all $1\le p<n$, and the conclusion $H^p(M;\R)=0$ agrees with the topology of the disk.
\end{example}

\begin{remark}[On equality and the role of $c$]
The theorem does not assert a geometric classification in the equality case of the pinching condition.  The final strict positivity in the proof comes from the full coefficient
\[
        \left(pc+\frac{p^2}{R^2}\right)
        \left(1-\frac{|x|^2}{R^2}\right),
\]
which is positive in the interior of the Euclidean ball for all $c\ge 0$ and vanishes only on the boundary.  Thus the proof includes the unweighted case $c=0$ without a separate limiting argument.  The theorem therefore explains the equality case of the proof, but it does not prove that the constant $(n-p)/R^2$ is sharp; the sharpness of this constant threshold remains open.
\end{remark}

\begin{remark}[Spatial relaxation of the pinching]\label{rem:spatial-relaxation}
The proof uses positivity of the coefficient $K(x)$ in \eqref{eq:K-def}.  The sharper lower bound \eqref{eq:K-lower} shows that the constant pinching assumption may be replaced by the pointwise strict interior condition
\[
        |A|^2
        < \frac{n-p}{R^2}
        +\left(c+\frac p{R^2}\right)
        \left(1-\frac{|x|^2}{R^2}\right)
        \qquad\text{on }\Int(M).
\]
Under this hypothesis the same proof gives $K(x)>0$ on $\Int(M)$ and hence $\Harm^p_{N,f}(M)=0$.  The non-strict version of this spatially relaxed inequality gives only $K\ge 0$ from this argument; a full equality-case statement for that more flexible condition would require additional analysis, for example a suitable unique-continuation or rigidity argument.  Geometrically, the term involving $c$ shows that a positive Gaussian parameter allows more extrinsic curvature away from the boundary, while the term $p/R^2$ is already present in the unweighted case.
\end{remark}

\begin{remark}[Non-flat examples]
The flat equatorial disk shows compatibility of the theorem with the basic topology, but it does not test the optimality of the constant.   Finding non-totally geodesic Gaussian free boundary $f$-minimal examples at or below the threshold, or proving that none exist under additional hypotheses, would clarify the rigidity content of the theorem.
\end{remark}

\section{Remarks and open questions}

\begin{remark}[Formal independence of the Gaussian parameter]
The constant pinching threshold in Theorem \ref{thm:main},
\[
        |A|^2\le \frac{n-p}{R^2},
\]
is formally independent of the Gaussian parameter $c$.  This is a consequence of two cancellations: the identity
\[
        \divf(x^T)=n-c|x|^2
\]
in the Hardy inequality, and the identity
\[
        \mathcal B_f^{[p]}=cp\,\Id-\sum_\alpha(S_\alpha^{[p]})^2
\]
in the weighted Bochner term.  The independence of the constant threshold should not obscure the stronger pointwise information in \eqref{eq:K-lower}: before one freezes the estimate to a constant pinching condition, the proof contains the positive interior term
\[
        \left(pc+\frac{p^2}{R^2}\right)
        \left(1-\frac{|x|^2}{R^2}\right).
\]
The summand $p^2/R^2$ comes from using $|x^T|^2\le |x|^2$ rather than the cruder bound $|x^T|^2\le R^2$, and it is exactly what allows the proof to recover the unweighted case $c=0$.
\end{remark}

\begin{remark}[The traceless second fundamental form]
Let
\[
        \Phi=A-\frac1n\vec H\,g
\]
be the traceless second fundamental form.  Then
\[
        |A|^2=|\Phi|^2+\frac1n|\vec H|^2.
\]
Since $\vec H=-cx^\perp$, this becomes
\[
        |A|^2=|\Phi|^2+\frac{c^2}{n}|x^\perp|^2.
\]
Thus Theorem \ref{thm:main} can be rewritten as a pinching statement involving $|\Phi|^2$.  For large $c$, the constant condition $|A|^2\le(n-p)/R^2$ strongly constrains the normal component $x^\perp$; Remark \ref{rem:spatial-relaxation} records the corresponding pointwise interior allowance supplied by the Gaussian term.
\end{remark}

\begin{remark}[On a possible weighted refined Kato inequality]
The proof deliberately uses only the standard Kato inequality
\[
        |\nabla\omega|^2\ge |\nabla|\omega||^2.
\]
The classical refined Kato inequality for ordinary harmonic forms \cite{CalderbankGauduchonHerzlich2000} cannot be applied directly to $f$-harmonic forms, because
\[
        \delta_f\omega=0
        \qquad\not\Rightarrow\qquad
        \delta\omega=0
\]
in the interior.  Indeed,
\[
        \delta_f\omega=\delta\omega+i_{\nabla^M f}\omega=0
        \quad\Longrightarrow\quad
        \delta\omega=-i_{\nabla^M f}\omega=-c\,i_{x^T}\omega,
\]
which is generally a nonzero first-order drift constraint.  This is the obstruction to importing the usual trace-free orthogonal decomposition of $\nabla\omega$ without modification.

The following discussion is only formal unless such a weighted refined Kato inequality is established.  Suppose that one could prove a weighted refined Kato inequality of the form
\begin{equation}\label{eq:hypothetical-refined-kato}
        |\nabla\omega|^2\ge \beta_f |\nabla|\omega||^2,
        \qquad \beta_f>1,
\end{equation}
for forms satisfying $d\omega=0$ and $\delta_f\omega=0$.  Then the same argument would replace the gradient coefficient in \eqref{eq:assembled} by
\[
        \beta_f-\frac{pt}{R^2}.
\]
Choosing
\[
        t=\frac{\beta_f R^2}{p}
\]
would formally improve the threshold to
\[
        |A|^2\le \frac{n-p/\beta_f}{R^2}.
\]
This suggests the following natural problem.
\end{remark}

\begin{question}
Is there a refined Kato inequality adapted to weighted harmonic forms, i.e. to forms satisfying
\[
        d\omega=0,
        \qquad
        \delta_f\omega=0,
\]
with a constant $\beta_f>1$ under geometric assumptions relevant to Gaussian $f$-minimal submanifolds?
\end{question}

\begin{remark}[Scope of the theorem]
The theorem is stated for tangential $f$-harmonic forms and therefore for absolute cohomology.  The corresponding problem for normal $f$-harmonic forms is subtler, because the weighted boundary term contains the geometry of $\partial M\subset M$ and the normal derivative of the weight.  A separate treatment would be required for relative cohomology.
\end{remark}

\end{document}